\documentclass[12pt]{article}
\usepackage[margin=1in]{geometry}
\usepackage{amsthm, amsmath,amsfonts,amssymb,mathrsfs}
\usepackage{hyperref, url}
\usepackage{stmaryrd}
\makeatletter
\newcommand{\subjclass}[2][2020]{%
  \let\@oldtitle\@title%
  \gdef\@title{\@oldtitle\footnotetext{#1 \emph{Mathematics subject classification}: #2}}%
}
\newcommand{\keywords}[1]{%
  \let\@@oldtitle\@title%
  \gdef\@title{\@@oldtitle\footnotetext{\emph{Key words and phrases}: #1}}%
}
\makeatother
\newtheorem{theorem}{Theorem}[section]
\newtheorem*{theorem*}{Theorem}
\newtheorem{lemma}[theorem]{Lemma}
\newtheorem{proposition}[theorem]{Proposition}

\theoremstyle{remark}

\newtheorem{remark}[theorem]{\bf Remark}

\newtheorem{conjecture}[theorem]{\bf Conjecture}
\newtheorem*{acknowledgements}{\bf Acknowledgements}
\numberwithin{equation}{section}

\begin{document}
\title{On the $P(t)$-adic Littlewood conjecture in odd characteristics}

\author{Li Lai, Johannes Sprang}
\date{}
\subjclass[2020]{11J61.}
\keywords{Littlewood conjecture.}

\maketitle

\begin{abstract}
The $P(t)$-adic Littlewood conjecture is a function field analogue of the famous $p$-adic Littlewood conjecture in Diophantine approximation.
In this paper, we prove that the $P(t)$-adic Littlewood conjecture fails for any irreducible polynomial $P(t)$ over any ground field of odd characteristic.
\end{abstract}

\section{Introduction}
Littlewood conjecture is a famous unsolved problem in simultaneous Diophantine approximation, which states that
\[
\inf_{q \in \mathbb{Z}\setminus\{0\}} |q|\cdot\lVert q\alpha \rVert \cdot \lVert q\beta \rVert = 0
\]
for any two real numbers $\alpha$ and $\beta$.
Here, $\lVert \cdot \rVert$ denotes the distance of a real number to the nearest integer(s). 
We refer the reader to Bugeaud's survey \cite{Bug2014} for the history and recent developments. 

The $p$-adic Littlewood conjecture, formulated by de Mathan and Teuli\'e \cite{dMT2004} in 2004, is an analogue of the classical Littlewood conjecture.
It states that for any real number $\alpha$ and any prime $p$, we have
\[
\inf_{q \in \mathbb{Z}\setminus\{0\}} |q|\cdot |q|_p\cdot \lVert q\alpha\rVert=0,
\]
where $|q|_p=p^{-v_p(q)}$ denotes the $p$-adic absolute value of an integer $q$, normalized by $v_p(p)=1$.
It is believed that the $p$-adic Littlewood conjecture is simpler than the classical one, but still deep.

The analogue conjecture over function fields has also been formulated by de Mathan and Teuli\'e in 2004:

\begin{conjecture}[the $P(t)$-adic Littlewood conjecture \cite{dMT2004}]\label{conj:P(t)-LC}
Let $\mathbb{K}$ be an arbitrary field and $P(t) \in \mathbb{K}[t]$ an irreducible polynomial over $\mathbb{K}$.
Then, for any $\Gamma(t) \in \mathbb{K}(\!(t^{-1})\!)$, we have
\[
\inf_{Q(t) \in \mathbb{K}[t]\setminus\{0\}} \left| Q(t) \right| \cdot \left| Q(t) \right|_{P(t)} \cdot  \left| \left\langle Q(t) \Gamma(t) \right\rangle \right| = 0.
\]
\end{conjecture}
Here, for any nonzero formal Laurent series $\Gamma(t)=\sum_{k\geqslant -k_0}a_kt^{-k}\in \mathbb{K}(\!(t^{-1})\!)$ with $a_{-k_0} \neq 0$, we define
\[
\deg \Gamma(t) := k_0 \in \mathbb{Z} \quad \text{and}\quad |\Gamma(t)|:=2^{\deg \Gamma(t)},
\]
and let
\[
\langle \Gamma(t) \rangle:=\sum_{k\geqslant 1} a_kt^{-k} \in \mathbb{K}(\!(t^{-1})\!)
\]
denote the fractional part of $\Gamma(t)$. By convention, $\deg 0 = -\infty$ and $\langle  0 \rangle = 0$.
Note that $\mathbb{K}(\!(t^{-1})\!)$ is the completion of $\mathbb{K}(t)$ with respect to the absolute value $|\cdot|$. 
For each $Q(t) \in \mathbb{K}[t]$, the $P(t)$-adic absolute value is given by $|Q(t)|_{P(t)}:=|P(t)|^{-v_{P(t)}(Q(t))}$, with the normalization $v_{P(t)}(P(t))=1$.

Conjecture \ref{conj:P(t)-LC} has been extensively studied in recent years, and many progresses have been made. Already, de Mathan and Teuli\'e \cite{dMT2004} observed that there are counterexamples to the $P(t)$-adic Littlewood Conjecture when $\mathbb{K}$ is an infinite field. 
By an elementary and important observation of Roberson \cite[Lemma 2.0.1]{Rob2026}, any counterexample to Conjecture \ref{conj:P(t)-LC} for the case $P(t)=t$ induces counterexamples to Conjecture \ref{conj:P(t)-LC} for all irreducible polynomials $P(t)$. (See Lemma \ref{lem:t_to_P(t)}.)
In 2021, Adiceam, Nesharim and Lunnon \cite{ANL2021} constructed an explicit counterexample to Conjecture \ref{conj:P(t)-LC} in the case $\operatorname{char} \mathbb{K}=3$. 
Recently, Garrett and Robertson \cite{GR2026} disproved Conjecture \ref{conj:P(t)-LC} in the case $\operatorname{char} \mathbb{K} \in \{5,7,11\}$. 
A significant progress was made by Adiceam and Badziahin \cite{AB2025+}; they disproved Conjeture \ref{conj:P(t)-LC} in the case $\operatorname{char} \mathbb{K} \equiv 3 \pmod{4}$. 

In this paper, we disprove Conjecture \ref{conj:P(t)-LC} over any field of odd characteristic, by explicitly constructing a counterexample. 
Our main result is the following.

\begin{theorem}[The $P(t)$-adic Littlewood conjecture fails over fields of odd characteristics]\label{thm:main}
Let $\mathbb{K}$ be an arbitrary field of odd characteristic $p$, and $\mathbb{F}_p$ the prime field contained in $\mathbb{K}$.
Let $P(t) \in \mathbb{K}[t]$ be any irreducible polynomial over $\mathbb{K}$.
Denote by 
\[
r= v_2(p-1) \in \mathbb{Z}_{\geqslant 1}
\] 
and 
\[
\mu^{*}_{2^r} = \left\{  \zeta \in \mathbb{F}_p   \mid  \zeta^{2^{r-1}} = -1 \right\}.
\]
Consider
\begin{equation}\label{eqn:construction}
\Lambda(t) = \sum_{k=0}^{\infty} \sum_{\zeta \in \mu^{*}_{2^r}} \frac{t^{2^k}}{t^{2^{k+1}}-\zeta} \in \mathbb{F}_p(\!(t^{-1})\!).
\end{equation}
Then, we have
\[
\inf_{Q(t) \in \mathbb{K}[t]\setminus\{0\}} \left| Q(t) \right| \cdot \left| Q(t) \right|_{P(t)} \cdot  \left|  \left\langle Q(t) \cdot \Lambda(P(t)) \right\rangle \right| \geqslant 2^{-2^{r+1}\deg P(t)}.
\]
Therefore, the formal Laurent series $\Lambda(P(t)) \in \mathbb{K}(\!(t^{-1})\!)$ is a counterexample to the $P(t)$-adic Littlewood conjecture over $\mathbb{K}$.
\end{theorem}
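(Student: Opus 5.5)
First I would use Lemma \ref{lem:t_to_P(t)} (Robertson's observation) to reduce to $P(t)=t$ over $\mathbb{K}$. The lemma should say: if $\inf_Q |Q|\,|Q|_t\,|\langle Q\Lambda(t)\rangle| \geqslant 2^{-c}$, then the analogous infimum for $\Lambda(P(t))$ with respect to $P(t)$ is at least $2^{-c\deg P}$. So it suffices to prove the case $P=t$ with $c=2^{r+1}$.

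Next, since $\Lambda$ has coefficients in $\mathbb{F}_p$, I would reduce to $Q\in\mathbb{F}_p[t]$. Expand $Q$ in an $\mathbb{F}_p$-basis of $\mathbb{K}$: $Q=\sum_i e_i Q_i$ with $Q_i\in\mathbb{F}_p[t]$. Then $\langle Q\Lambda\rangle=\sum_i e_i\langle Q_i\Lambda\rangle$. Degrees and $t$-adic valuations are controlled coefficientwise, so a bound for every $Q_i$ gives one for $Q$.

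Now write $Q=t^m Q_0$ with $t\nmid Q_0$. The condition becomes: for all $Q_0$ with $Q_0(0)\neq 0$ and all $m\geqslant 0$,
\[
\deg\langle t^m Q_0\Lambda\rangle \geqslant -\deg Q_0 - 2^{r+1}.
\]
This is a statement about Hankel determinants. Write $\Lambda=\sum_{n\geqslant 1} a_n t^{-n}$. A small $\langle t^mQ_0\Lambda\rangle$ means that the shifted sequence $(a_{n+m})$ satisfies a linear recurrence of length $\deg Q_0$ for unusually many terms. By the standard Hankel/Padé dictionary (as in \cite{ANL2021, AB2025+}), the inequality for all $m$ is equivalent to the following. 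In the number wall (Hankel determinants $H_{m,\ell}=\det(a_{m+i+j})_{0\leqslant i,j<\ell}$) of $(a_n)$, every window of zeros has side length less than $2^{r+1}$ (up to the exact constant).

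So the core is to compute $(a_n)$ and its number wall. Expanding
\[
\frac{t^{2^k}}{t^{2^{k+1}}-\zeta}=\sum_{j\geqslant 0}\zeta^{j} t^{-2^k(2j+1)}
\]
and summing over $\zeta\in\mu^*_{2^r}$ gives $a_n=\sum_\zeta \zeta^{j}$ when $n=2^k(2j+1)$. This equals $2^{r-1}\chi(j)$, where $\chi(j)$ is nonzero exactly when $2^{r-1}\mid j$, with a sign $(-1)^{j/2^{r-1}}$. So $a_n$ is a $2$-automatic, Paperfolding-like sequence. Its odd-part structure $n=2^k\cdot(\text{odd})$ gives a self-similarity: $\Lambda(t)=\Lambda_0(t)+\Lambda(t^2)$, where $\Lambda_0$ is the $k=0$ term, a rational function with denominator $t^{2^{r+1}}+1$ (the product of $t^2-\zeta$ over $\zeta\in\mu^*_{2^r}$). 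I would exploit this Mahler-type functional equation directly in the continued fraction expansion rather than via number walls. The plan is to show that the continued fraction of $\Lambda$ and of each $t^m\Lambda$ has all partial quotients of degree at most $2^{r+1}$, uniformly in $m$. Here one uses that $\Lambda(t^2)$ squares up nicely in characteristic $p$ only through the substitution $t\mapsto t^2$, together with the cancellation $\sum_\zeta\zeta^j$.

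The main obstacle is this uniform bound over all shifts $t^m$. I would attempt an induction on $m$ via $m\mapsto\lfloor m/2\rfloor$, using the functional equation. Write $t^m\Lambda(t)=t^m\Lambda_0(t)+t^{m}\Lambda(t^2)$. For $m$ even, $t^m\Lambda(t^2)=(t^{m/2}\Lambda)(t^2)$. A good rational approximation $A/Q_0$ to $t^m\Lambda$ would then yield one to $t^{m/2}\Lambda$ with comparable quality, since $\Lambda_0$ has the fixed denominator $t^{2^{r+1}}+1$. This is the source of the constant $2^{r+1}$. The odd case is handled by splitting $Q_0=U(t^2)+tV(t^2)$ into even and odd parts. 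The delicate point is ensuring no loss accumulates through the induction. I expect this to require tracking the exact leading coefficients, using $\sum_\zeta \zeta^j=0$ unless $2^{r-1}\mid j$, to rule out extra cancellations.
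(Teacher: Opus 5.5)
Your preliminary reductions are sound. Robertson's lemma reduces to $P(t)=t$, and your basis-expansion argument is a valid substitute for Lemma~\ref{lem:KtoFp}: since $\deg Q=\max_i\deg Q_i$, $\deg\langle Q\Lambda\rangle=\max_i\deg\langle Q_i\Lambda\rangle$ and $v_t(Q)=\min_i v_t(Q_i)$, it suffices to use the component $Q_i$ of minimal $t$-adic valuation. The Hankel/continued-fraction reformulation is also correct, but it only restates the goal.

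\textbf{The gap is the core step.} You give no mechanism that makes the halving induction close without loss. The tool you propose, tracking leading coefficients together with the vanishing of $\sum_\zeta\zeta^j$ for $2^{r-1}\nmid j$, is not what makes it work. If you clear the denominator of $\Lambda_0$ at each halving step, you pay a fixed cost every time, while the halving also roughly halves the ``excess'' $-\deg Q-\deg\langle Qt^m\Lambda\rangle$. So the excess shrinks rather than being preserved, and no contradiction results. You need a property that is preserved \emph{exactly} under halving.

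The paper gets this by strengthening the statement. It multiplies once by $t^{2^r}-1$. This is where $2^{r+1}=2\cdot 2^r$ comes from: $|Q|$ and $|\langle Qt^m\Lambda\rangle|$ each grow by $2^{2^r}$. It then shows that no $Q\in\mathbb{F}_p[t]\setminus\{0\}$ and $m\geqslant 0$ satisfy both $|\langle Qt^m\Lambda\rangle|<1/|Q|$ and $t^{2^r}-1\mid Q$. The argument is a minimal-counterexample descent on $\deg Q$, not on $m$:
\begin{itemize}
\item Write $Q=Q_0(t^2)+tQ_1(t^2)$ with $Q_i=(t^{2^{r-1}}-1)\widetilde Q_i$.
\item By the functional equation, the even and odd parts give two coupled conditions. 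Each says $\langle Q_i(t)t^{m'}\Lambda(t)+\sum_\zeta c_\zeta/(t-\zeta)\rangle$ is small, with $c_\zeta$ a multiple of $Q_{1-i}(\zeta)$.
\item Cross-multiplying by $\widetilde Q_{1-\delta}$ and $t^\delta\widetilde Q_\delta$ and subtracting eliminates $\Lambda$. A Vandermonde argument then gives $\widetilde Q_0(\zeta)^2=\zeta\,\widetilde Q_1(\zeta)^2$ for all $\zeta\in\mu^*_{2^r}$.
\end{itemize}
The decisive arithmetic input, absent from your plan, is that each $\zeta\in\mu^*_{2^r}$ is a \emph{non-square} in $\mathbb{F}_p$. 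Its order $2^r$ is the full $2$-part of $p-1$, which is why $r=v_2(p-1)$. Hence $\widetilde Q_0(\zeta)=\widetilde Q_1(\zeta)=0$, and the rational correction terms vanish. Then the top-degree parity component $Q_\epsilon$ is again divisible by $t^{2^r}-1$. With $m$ replaced by $\lfloor m/2\rfloor+\epsilon\delta$, it gives a counterexample of smaller degree.

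Two smaller points. First, $|\mu^*_{2^r}|=2^{r-1}$, so $\prod_\zeta(t^2-\zeta)=t^{2^r}+1$, not $t^{2^{r+1}}+1$; the constant does not come from this denominator. Second, for $m$ even the identity $t^m\Lambda(t^2)=(t^{m/2}\Lambda)(t^2)$ does not transfer an approximation unless $Q$ itself is even. So the parity splitting of $Q$ is needed in both cases.
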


\begin{remark}
Since 
\[
\left|  \sum_{\zeta \in \mu^{*}_{2^r}} \frac{t^{2^k}}{t^{2^{k+1}}-\zeta} \right| \leqslant 2^{-2^k},
\]
the right-hand side of \eqref{eqn:construction} indeed converges in $\mathbb{F}_p(\!(t^{-1})\!)$. 
Moreover, our construction $\Lambda(t)$ is characterized by the following ``functional equation'':
\begin{equation}\label{eqn:functional_equation}
\Lambda(t) = \Lambda(t^2) + \sum_{\zeta \in \mu^{*}_{2^r}} \frac{t}{t^{2}-\zeta}. 
\end{equation}
\end{remark}

We point out that, Theorem \ref{thm:main} not only completely disproves Conjecture \ref{conj:P(t)-LC} in odd characteristics, but also our proof is far simpler than those presented in \cite{AB2025+, ANL2021, GR2026}.
For $r=v_2(p-1) \geqslant 2$, our counterexample $\Lambda(t)$ is distinct from, yet analogous to, the conjectural counterexample proposed by Garret and Robertson in \cite{GR2026}. 
When $r=1$, our counterexample $\Lambda(t)$ coincides with Adiceam and Badziahin's counterexample in \cite{AB2025+}.

The structure of this paper is as follows. 
In Section \ref{sec:2}, we reduce Theorem \ref{thm:main} to a key property of $\Lambda(t)$; namely, Proposition \ref{prop:key}.
In Section \ref{sec:3}, we prove Proposition \ref{prop:key}.

\section{Reduction to a key property of $\Lambda(t)$}\label{sec:2}

We first state an important observation of Robertson.

\begin{lemma}[Robertson {\cite[Lemma 2.0.1]{Rob2026}}]\label{lem:t_to_P(t)}
Let $\mathbb{K}$ be any field. 
Assume that $\Gamma(t) \in \mathbb{K}(\!(t^{-1})\!)$ fails the $t$-adic Littlewood conjecture in the sense that 
\[
\inf_{Q(t) \in \mathbb{K}[t]\setminus\{0\}} \left| Q(t) \right| \cdot \left| Q(t) \right|_{t} \cdot  \left|  \left\langle Q(t) \cdot \Gamma(t) \right\rangle \right| = 2^{-\rho}
\] 
for some integer $\rho$.
Then, the Laurent series $\Gamma(P(t)) \in \mathbb{K}(\!(t^{-1})\!)$ fails the $P(t)$-adic Littlewood conjecture in the sense that
\[
\inf_{Q(t) \in \mathbb{K}[t]\setminus\{0\}} \left| Q(t) \right| \cdot \left| Q(t) \right|_{P(t)} \cdot  \left| \left\langle Q(t) \cdot \Gamma(P(t)) \right\rangle \right| = 2^{-\rho \cdot \deg P(t)}.
\]
\end{lemma}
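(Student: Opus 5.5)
The plan is to prove two inequalities between the infima, each by transporting polynomials along the substitution $t\mapsto P(t)$. Write $d=\deg P(t)$. We use two facts about the ring homomorphism $\phi:\mathbb{K}(\!(t^{-1})\!)\to\mathbb{K}(\!(t^{-1})\!)$, $\Gamma(t)\mapsto\Gamma(P(t))$.
\begin{itemize}
\item It is well defined and continuous, since $|\Gamma(P(t))|=|\Gamma(t)|^{d}$.
\item It maps $\mathbb{K}[t]$ into $\mathbb{K}[t]$, and it maps the fractional part space $t^{-1}\mathbb{K}[[t^{-1}]]$ into itself. The latter holds because $P(t)^{-1}$ has degree $-d<0$.
\end{itemize}
Consequently $\langle \phi(X)\rangle=\phi(\langle X\rangle)$, and $|\phi(X)|=|X|^d$.

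\textbf{Step 1: the infimum for $P$ is at most $2^{-\rho d}$.} Take $Q$ with $|Q|\,|Q|_t\,|\langle Q\Gamma\rangle|=2^{-\rho}$. The infimum is attained because all values are powers of $2$ and the product is bounded below by $2^{-\rho}$. Set $\tilde Q=Q(P(t))$. Then $|\tilde Q|=|Q|^d$. Also $v_{P}(\tilde Q)=v_t(Q)$, so $|\tilde Q|_P=2^{-d\,v_t(Q)}=|Q|_t^d$. Finally $|\langle \tilde Q\,\Gamma(P)\rangle|=|\phi(\langle Q\Gamma\rangle)|=|\langle Q\Gamma\rangle|^d$. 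Hence the product equals $2^{-\rho d}$.

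\textbf{Step 2: every $\tilde Q\neq 0$ gives a product at least $2^{-\rho d}$.} This is the main obstacle, because a general $\tilde Q$ is not of the form $Q(P(t))$. The plan is to expand $\tilde Q$ in base $P$ with polynomial digits:
\[
\tilde Q=\sum_{i=0}^{d-1} t^i\,Q_i(P(t)).
\]
This decomposition holds because $\mathbb{K}[t]$ is free over $\mathbb{K}[P(t)]$ with basis $1,t,\dots,t^{d-1}$.

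The same kind of decomposition should extend to Laurent series. Every $X\in\mathbb{K}(\!(t^{-1})\!)$ can be written uniquely as $X=\sum_i t^i\phi(X_i)$ with $X_i\in\mathbb{K}(\!(t^{-1})\!)$. The norm should satisfy $|X|=\max_i |t^i|\,|X_i|^d$. The point is that the degrees $\deg(t^i\phi(X_i))$ lie in distinct residue classes mod $d$, so no cancellation of leading terms can occur.

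The $P$-adic valuation behaves the same way. One has $v_P(\tilde Q)=\min_i v_t(Q_i)$, because $1,t,\dots,t^{d-1}$ remain linearly independent modulo $P$: the residue field $\mathbb{K}[t]/P$ has basis $1,\bar t,\dots,\bar t^{d-1}$ over $\mathbb{K}$.

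Next, $\tilde Q\,\Gamma(P)=\sum_i t^i\phi(Q_i\Gamma)$. The plan is to show that the fractional part is compatible with the decomposition: $\langle\sum_i t^i\phi(Y_i)\rangle=\sum_i t^i\phi(\langle Y_i\rangle)$ up to polynomial terms. For $0\le i<d$, $t^i\phi(\text{fractional})$ has degree $i-d\cdot(\ge1)<0$, so it is fractional. Hence
\[
|\langle \tilde Q\Gamma(P)\rangle|=\max_i |t^i|\,|\langle Q_i\Gamma\rangle|^d .
\]

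Now choose an index $j$ with $v_t(Q_j)=v_P(\tilde Q)$ and $Q_j\ne0$. Then:
\begin{itemize}
\item $|\tilde Q|\ge |Q_j|^d$;
\item $|\tilde Q|_P=|Q_j|_t^d$;
\item $|\langle\tilde Q\Gamma(P)\rangle|\ge |\langle Q_j\Gamma\rangle|^d$, using $|t^j|\ge1$.
\end{itemize}
Multiplying, the product is at least $\big(|Q_j|\,|Q_j|_t\,|\langle Q_j\Gamma\rangle|\big)^d\ge 2^{-\rho d}$.

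The delicate points are the norm formula for the base-$P$ decomposition (the distinct residues mod $d$) and the independence mod $P$ used for the valuation. I would verify both carefully.
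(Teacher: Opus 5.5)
Your proof is correct. Note that the paper does not prove this lemma at all: it quotes Robertson's Lemma 2.0.1 without argument. What you have written is therefore a self-contained proof rather than a variant of one in the text.

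Both steps check out, including the two points you flag as delicate.
\begin{itemize}
\item For $0\le i<d$, the degrees $\deg\bigl(t^i\phi(Y_i)\bigr)=i+d\deg Y_i$ are pairwise distinct. This gives $\bigl|\sum_i t^i\phi(Y_i)\bigr|=\max_i |t^i|\,|Y_i|^d$.
\item Write $Q_i=t^mQ_i'$ with $m=\min_i v_t(Q_i)$. Then $\tilde Q=P^m\sum_i t^iQ_i'(P)$, and $\sum_i t^iQ_i'(P)\equiv\sum_i Q_i'(0)t^i\not\equiv 0 \pmod{P}$, because the right-hand side is a nonzero polynomial of degree $<d$.
\end{itemize}
Incidentally, this argument only uses $\deg P\ge 1$, not irreducibility.

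Two small tightenings:
\begin{itemize}
\item You never need a decomposition $X=\sum_i t^i\phi(X_i)$ for arbitrary Laurent series. You only need the no-cancellation identity, applied to $\tilde Q=\sum_i t^iQ_i(P)$ and to $\tilde Q\,\Gamma(P)=\sum_i t^i\phi(Q_i\Gamma)$.
\item The fractional-part identity holds exactly, not just up to polynomial terms: $\langle\tilde Q\,\Gamma(P)\rangle=\sum_i t^i\phi(\langle Q_i\Gamma\rangle)$. Indeed, each $t^i\phi\bigl(Q_i\Gamma-\langle Q_i\Gamma\rangle\bigr)$ is a polynomial, while each $t^i\phi(\langle Q_i\Gamma\rangle)$ has degree at most $i-d<0$.
\end{itemize}
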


The next lemma is a standard result from elementary linear algebra. It essentially appeared in \cite[Theorem 2.2]{AB2025+}. We include its short proof here for the reader’s convenience.

\begin{lemma}\label{lem:KtoFp}
Let $\mathbb{K}$ be any field and $\mathbb{F}$ a subfield of $\mathbb{K}$.
Let $Q(t)\in \mathbb{K}[t]\setminus \{0\}$ and $\Gamma(t) \in \mathbb{F}(\!(t^{-1})\!)$.
Then, there exists $Q^{*}(t) \in \mathbb{F}[t]\setminus\{0\}$ such that
\[
\deg Q^*(t) \leqslant \deg Q(t) \quad\text{and}\quad \deg \langle Q^*(t)\Gamma(t) \rangle \leqslant \deg \langle Q(t)\Gamma(t) \rangle.
\]
\end{lemma}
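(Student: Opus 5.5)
We reduce the statement to a linear-algebra fact over $\mathbb{F}$. Write $n=\deg Q(t)$ and $\Gamma(t)=\sum_{k\geqslant -k_0}a_kt^{-k}$ with $a_k\in\mathbb{F}$. Set $d=\deg\langle Q(t)\Gamma(t)\rangle$. If $d=-\infty$, the same argument below applies with "all coefficients" in place of "finitely many". Otherwise $d\leqslant -1$.

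The condition $\deg\langle Q^*\Gamma\rangle\leqslant d$ says that the coefficients of $t^{-k}$ in $Q^*(t)\Gamma(t)$ vanish for all $k$ with $1\leqslant k<-d$. Writing $Q^*(t)=\sum_{j=0}^{n}x_jt^j$, the coefficient of $t^{-k}$ in $Q^*\Gamma$ is $\sum_{j=0}^n a_{k+j}x_j$. So the condition is a homogeneous linear system
\[
\sum_{j=0}^{n} a_{k+j}\,x_j=0 \qquad (1\leqslant k<-d),
\]
with coefficients $a_{k+j}\in\mathbb{F}$, in the unknowns $(x_0,\dots,x_n)$. When $d=-\infty$ the system is countably infinite, indexed by all $k\geqslant 1$.

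The coefficient vector of $Q(t)$ is a nonzero solution in $\mathbb{K}^{n+1}$, since $Q\neq 0$ and $\deg\langle Q\Gamma\rangle=d$. Hence the $\mathbb{K}$-solution space is nonzero. The solution space of a linear system with coefficients in $\mathbb{F}$ has the same dimension over $\mathbb{K}$ as over $\mathbb{F}$, because the rank of a matrix is unchanged by field extension (row reduction happens inside $\mathbb{F}$). In the infinite case the row space is a subspace of $\mathbb{F}^{n+1}$ spanned by finitely many rows, so only finitely many equations matter and the same rank argument applies. It follows that there is a nonzero solution $(x_0,\dots,x_n)\in\mathbb{F}^{n+1}$.

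Let $Q^*(t)=\sum_{j=0}^n x_jt^j\in\mathbb{F}[t]\setminus\{0\}$. Then $\deg Q^*\leqslant n=\deg Q$, and $\deg\langle Q^*\Gamma\rangle\leqslant d=\deg\langle Q\Gamma\rangle$, as required.

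The only subtle point is the invariance of rank under field extension, including the infinite system. This is handled by noting that the row space is finite-dimensional.
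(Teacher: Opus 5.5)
Your proof is correct and follows the paper's argument. In both, the vanishing of the relevant coefficients of $t^{-k}$ is a homogeneous Hankel-type linear system with coefficients in $\mathbb{F}$, and a nonzero solution over $\mathbb{K}$ forces a nonzero solution over $\mathbb{F}$ because rank is unchanged under field extension. Your explicit treatment of the case $\langle Q(t)\Gamma(t)\rangle = 0$, via finite-dimensionality of the row space, is a small improvement: the paper's proof only writes out the case $\deg\langle Q(t)\Gamma(t)\rangle=-M-1$ with $M$ finite.
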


\begin{proof}
We may assume $\Gamma(t) \neq 0$.
Suppose $Q(t)=\sum_{k=0}^{N}a_kt^{k}$ with $a_N \neq 0$ and $\Gamma(t)=\sum_{k \geqslant -k_0} b_kt^{-k}$. 
Note that for any $k \in \mathbb{Z}_{\geqslant 1}$, the coefficient of $t^{-k}$ in $\langle Q(t)\Gamma(t)\rangle$ is $\sum_{j=0}^{N} b_{k+j}a_j$. 
Therefore, if $\deg \langle Q(t)\Gamma(t)\rangle = -M-1$, then 
\[
\boldsymbol{B}\boldsymbol{a}=\boldsymbol{0}, \quad\text{where } \boldsymbol{B}=(b_{i+j-1}) \in \operatorname{Mat}_{M \times (N+1)}(\mathbb{F}) \text{ and } \boldsymbol{a}= (a_{i-1}) \in \operatorname{Mat}_{(N+1)\times 1}(\mathbb{K}).
\]
(If $M=0$, we can take $Q^{*}(t)=1$ to finish the proof. 
In the following, we assume $M \in \mathbb{Z}_{\geqslant 1}$.)
Since $\boldsymbol{a} \neq \boldsymbol{0}$, we deduce that $\boldsymbol{B} \in \operatorname{Mat}_{M \times (N+1)}(\mathbb{F})$ does not have full column-rank. 
Therefore, there exists $\boldsymbol{a}^{*}=(a_{i-1}^*) \in \operatorname{Mat}_{(N+1)\times 1}(\mathbb{F}) \setminus \{\boldsymbol{0}\}$ such that $\boldsymbol{B}\boldsymbol{a}^{*}=\boldsymbol{0}$. 
Thus, the polynomial $Q^*(t)=\sum_{k=0}^{N} a_{k}^* t^k$ satisfies all requirements.
\end{proof}

The following Proposition \ref{prop:key} is the key property of $\Lambda(t)$ constructed in \eqref{eqn:construction}. 

\begin{proposition}[The key property of $\Lambda(t)$]\label{prop:key}
Let $p$ be any odd prime and $r=v_2(p-1)$.
Let $\Lambda(t) \in \mathbb{F}_p(\!(t^{-1})\!)$ be the Laurent series defined in \eqref{eqn:construction}.
Then, there does not exist a polynomial $Q(t) \in \mathbb{F}_p[t] \setminus \{0\}$ and a nonnegative integer $m$ such that
\[
\left| \left\langle Q(t) \cdot t^m\Lambda(t)  \right\rangle \right| < \frac{1}{|Q(t)|} \quad\text{and}\quad t^{2^{r}}-1 \mid Q(t).
\] 
\end{proposition}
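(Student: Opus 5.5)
The plan is a descent argument driven by the functional equation \eqref{eqn:functional_equation}. First I will make the rational part explicit. The elements of $\mu^{*}_{2^r}$ are exactly the primitive $2^r$-th roots of unity in $\mathbb{F}_p$; there are $2^{r-1}$ of them because $2^r \mid p-1$. Hence $\prod_{\zeta\in\mu^*_{2^r}}(x-\zeta)=x^{2^{r-1}}+1$. Taking a logarithmic derivative at $x=t^2$ gives
\[
R(t):=\sum_{\zeta \in \mu^{*}_{2^r}} \frac{t}{t^{2}-\zeta} = \frac{2^{r-1}\,t^{2^r-1}}{t^{2^r}+1},
\]
and therefore $\Lambda(t)=\Lambda(t^2)+R(t)$. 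Note that $t^{2^r}+1$ and $t^{2^r}-1$ are coprime because $p$ is odd. This coprimality is where the hypothesis $t^{2^r}-1\mid Q$ should enter: it prevents $Q$ from absorbing the denominator of $R$.

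Next, suppose for contradiction that a pair $(Q,m)$ satisfies the two conditions, and choose one with $\deg Q$ minimal (and then $m$ minimal). Write $\|\cdot\|$ for $|\langle\cdot\rangle|$. I will decompose $Q(t)=A(t^2)+tB(t^2)$ and split $\Lambda(t)=\Lambda(t^2)+R(t)$. In characteristic $\ne 2$, the even part of a Laurent series $F(t)$ is $\tfrac12\bigl(F(t)+F(-t)\bigr)$. Taking the even or odd part of $t^mQ(t)\Lambda(t)$ yields $\tfrac{1}{2}$ of a combination of $t^{m}Q(t)\Lambda(t)$ and $(-t)^mQ(-t)\Lambda(-t)$. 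Since $\Lambda(-t)=\Lambda(t^2)-R(t)$, the identity
\[
t^mQ(t)\Lambda(t)-(-t)^mQ(-t)\Lambda(-t)=\bigl(t^mQ(t)-(-t)^mQ(-t)\bigr)\Lambda(t^2)+\bigl(t^mQ(t)+(-t)^mQ(-t)\bigr)R(t)
\]
lets me express $C(t^2)\Lambda(t^2)$ for a suitable $C$ (essentially $A$ or $B$ times a power of $t$) as a combination of $t^mQ(t)\Lambda(t)$, its image under $t\mapsto -t$, and a rational function with denominator $t^{2^r}+1$. Multiplying through by $t^{2^r}+1$ clears that denominator, at the cost of $2^r$ in degree. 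Then the substitution $t^2\mapsto t$, which halves degrees, gives a new pair $(Q',m')$ with
\[
Q'(t)=(t^{2^{r-1}}+1)\,C(t)\cdot(\text{power of } t)
\]
and $\deg Q'$ roughly $\tfrac12\deg Q+2^{r-1}$. I then have to check that $\|t^{m'}Q'\Lambda\|<1/|Q'|$ still holds, and that $t^{2^r}-1\mid Q'$.

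The divisibility is the delicate point. The hypothesis $t^{2^r}-1\mid Q(t)$ is invariant under $t\mapsto -t$, so $t^{2^r}-1$ divides both $A(t^2)$ and $B(t^2)$, i.e.\ $t^{2^{r-1}}-1\mid A(t),B(t)$. The extra factor $t^{2^{r-1}}+1$ coming from clearing denominators then restores $t^{2^r}-1\mid Q'$. This is exactly why the modulus $t^{2^r}-1$ and the roots $\mu^*_{2^r}$ are matched.

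Finally, the base case. When $\deg Q$ is small (say $\deg Q\le 2^{r+1}$), the descent stops decreasing degree. There I would argue directly. Since $t^{2^r}-1\mid Q$, we have $\deg Q\ge 2^r$. The first coefficients of $t^m\Lambda$ are explicit from the expansion of $R(t)$ and $R(t^2)$. I would check that no such $Q$ kills the required $\deg Q$ consecutive coefficients, using the Hankel-matrix formulation from the proof of Lemma~\ref{lem:KtoFp}, i.e.\ nonvanishing of the relevant determinant.

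I expect the main obstacle to be the degree bookkeeping in the descent step. I must show that the quality of approximation survives the operation of multiplying by $t^{2^r}+1$ and halving degrees; this needs care because the error terms from $R(t)$ multiplied by $Q$ are only rational, not polynomial. I must also show that $\deg Q'<\deg Q$ strictly, or else handle the finitely many fixed points. If the naive inequality loses a constant, I would strengthen the induction hypothesis to the sharper statement $\|t^mQ\Lambda\|\ge 2^{-2^{r+1}}/|Q|$, which is what Theorem~\ref{thm:main} ultimately records.
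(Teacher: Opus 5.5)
\textbf{There is a genuine gap: clearing the denominator destroys the approximation inequality.}

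Your preliminary steps are correct: the closed form $R(t)=2^{r-1}t^{2^r-1}/(t^{2^r}+1)$, the parity splitting, and the fact that $t^{2^{r-1}}-1$ divides both halves of $Q$. But the descent step breaks down at exactly the point you flagged, and the loss is not a constant that a stronger hypothesis can absorb.

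Write $\deg Q=2q+\epsilon$ and let $C$ be the leading half, so $\deg C=q$. From $\bigl|\langle t^mQ\Lambda\rangle\bigr|<1/|Q|$, the half-relation containing $C$ is, after $t^2\mapsto t$, only guaranteed to be $\bigl\langle C(t)t^{n'}\Lambda(t)+S(t)/(t^{2^{r-1}}+1)\bigr\rangle=O(t^{-q-1})$ for some polynomial $S$. In one parity case it is at best $O(t^{-q-2})$.

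Clearing the denominator gives $Q'=(t^{2^{r-1}}+1)C$ with $\deg Q'=q+2^{r-1}$, but only $\deg\langle t^{n'}Q'\Lambda\rangle\leqslant -q-1+2^{r-1}$. Hence $|Q'|\cdot\bigl|\langle t^{n'}Q'\Lambda\rangle\bigr|\leqslant 2^{2^r-1}$ (at best $2^{2^r-2}$). This bound is never below $1$, so $(Q',n')$ need not satisfy the hypothesis.

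More generally, suppose $|Q|\cdot\bigl|\langle t^mQ\Lambda\rangle\bigr|=2^{-D}$. Halving turns $D$ into at most $\lceil D/2\rceil+1$, and clearing subtracts $2^r$. So the new defect satisfies $D'\leqslant\lceil D/2\rceil-1<D$. The defect strictly drops at every step, so strengthening the hypothesis to a fixed bound such as $2^{-2^{r+1}}$ cannot help.

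The divisibility $t^{2^r}-1\mid Q'$ comes out right only because you paid for the factor $t^{2^{r-1}}+1$ with approximation quality. Your base case is also not a finite determinant check: for each fixed degree, $m$ still ranges over all nonnegative integers and $p$ over all odd primes.

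\textbf{The missing idea: show the polar part vanishes, so nothing needs clearing.}

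The paper keeps both halves. Write $Q=(t^{2^r}-1)\widetilde{Q}$ with $\widetilde{Q}=\widetilde{Q}_0(t^2)+t\widetilde{Q}_1(t^2)$, set $Q_i=(t^{2^{r-1}}-1)\widetilde{Q}_i$, and write $m=2n+\delta$. The even and odd parts then give two relations:
\begin{itemize}
\item one for $Q_\delta(t)t^{n+\delta}\Lambda(t)$, with polar part $\sum_{\zeta}\zeta^{n+1}Q_{1-\delta}(\zeta)/(t-\zeta)$;
\item one for $Q_{1-\delta}(t)t^{n}\Lambda(t)$, with polar part $\sum_{\zeta}\zeta^{n+\delta}Q_{\delta}(\zeta)/(t-\zeta)$.
\end{itemize}
Here $Q_i(\zeta)=-2\widetilde{Q}_i(\zeta)$.

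Multiply these relations by $\widetilde{Q}_{1-\delta}$ and by $t^\delta\widetilde{Q}_\delta$ respectively. This makes the $\Lambda$-terms identical. Subtracting leaves a purely rational sum that must be $O(t^{-2^{r-1}-1})$. A Vandermonde argument then gives $\widetilde{Q}_0(\zeta)^2=\zeta\,\widetilde{Q}_1(\zeta)^2$ for all $\zeta\in\mu^{*}_{2^r}$.

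Since $r=v_2(p-1)$, every such $\zeta$ is a non-square in $\mathbb{F}_p$, which forces $\widetilde{Q}_0(\zeta)=\widetilde{Q}_1(\zeta)=0$. The polar parts therefore disappear at no cost. Then $Q_\epsilon$ itself is a smaller counterexample: it has degree $q<\deg Q$, and $t^{2^r}-1\mid Q_\epsilon$ because $t^{2^{r-1}}+1\mid\widetilde{Q}_\epsilon$. No base case is needed.

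Your argument never uses that the $\zeta$ are non-squares; it uses only $2^r\mid p-1$. It would therefore apply verbatim to, say, $\sum_k t^{2^k}/(t^{2^{k+1}}+1)$ over $\mathbb{F}_5$. There $-1=2^2$, and the relation $\widetilde{Q}_0(\zeta)^2=\zeta\,\widetilde{Q}_1(\zeta)^2$ no longer forces vanishing. This strongly suggests the proposed descent cannot be completed without that arithmetic input.
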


Proposition \ref{prop:key} will be proved in the next section. Now, we prove Theorem \ref{thm:main} assuming Proposition \ref{prop:key}.

\begin{proof}[Proof of Theorem \textup{\ref{thm:main}} assuming Proposition \textup{\ref{prop:key}}]
By Lemma \ref{lem:t_to_P(t)}, it suffices to prove that 
\[
\inf_{Q(t) \in \mathbb{K}[t]\setminus\{0\}} \left| Q(t) \right| \cdot \left| Q(t) \right|_{t} \cdot  \left|  \left\langle Q(t) \cdot \Lambda(t) \right\rangle \right| \geqslant 2^{-2^{r+1}}.
\]
Since every polynomial $Q(t) \in \mathbb{K}[t]\setminus\{0\}$ can be written as $Q(t)=t^m \widetilde{Q}(t)$ for some nonnegative integer $m$ and some polynomial $\widetilde{Q}(t) \in \mathbb{K}[t]$ with $\widetilde{Q}(0) \neq 0$, it sufficies to prove that
\[
\inf_{\substack{Q(t) \in \mathbb{K}[t]\setminus\{0\} \\ m \in \mathbb{Z}_{\geqslant 0}}} |Q(t)| \cdot \left|  \left\langle Q(t) \cdot t^m\Lambda(t) \right\rangle \right| \geqslant 2^{-2^{r+1}}.
\]

We argue by contradiction. 
Suppose that there exist a polynomial $Q(t) \in \mathbb{K}[t]\setminus\{0\}$ and a nonnegative integer $m$ such that $|Q(t)| \cdot \left|  \left\langle Q(t) \cdot t^m\Lambda(t) \right\rangle \right| < 2^{-2^{r+1}}$. 
Note that $t^m\Lambda(t) \in \mathbb{F}_p(\!(t^{-1})\!)$.
By Lemma \ref{lem:KtoFp}, there exists a $Q^{*}(t) \in \mathbb{F}_p[t]\setminus \{ 0 \}$ such that $|Q^{*}(t)| \leqslant |Q(t)|$ and $\left|  \left\langle Q^{*}(t) \cdot t^m\Lambda(t) \right\rangle \right| \leqslant \left|  \left\langle Q(t) \cdot t^m\Lambda(t) \right\rangle \right|$.
Take $\widehat{Q}(t) = (t^{2^r}-1)Q^{*}(t)$, then 
\begin{align*}
\left|\widehat{Q}(t)\right| \cdot \left| \left\langle \widehat{Q}(t) \cdot t^m\Lambda(t) \right\rangle \right| &\leqslant 2^{2^r} |Q^*(t)| \cdot 2^{2^r} \left|  \left\langle Q^*(t) \cdot t^m\Lambda(t) \right\rangle \right| \\
&\leqslant 2^{2^{r+1}} |Q(t)| \cdot \left|  \left\langle Q(t) \cdot t^m\Lambda(t) \right\rangle \right| < 1.
\end{align*}
Thus, we have
\[ 
\widehat{Q}(t) \in \mathbb{F}_p[t] \setminus \{0\}, \quad \left| \left\langle \widehat{Q}(t) \cdot t^m\Lambda(t)  \right\rangle \right| < \frac{1}{\left|\widehat{Q}(t)\right|} \quad\text{and}\quad t^{2^{r}}-1 \mid \widehat{Q}(t),
\]
which contradicts Proposition \ref{prop:key}.
\end{proof}

\section{Proof of the key property of $\Lambda(t)$}\label{sec:3}

In this section, we prove Proposition \ref{prop:key}.
The following big $O$ notation is convenient: For $\Gamma(t) \in \mathbb{K}(\!(t^{-1})\!)$ and $n \in \mathbb{Z}$, we say $\Gamma(t)=O(t^{n})$ if $\deg \Gamma(t) \leqslant n$. 
Recall the convention $\deg 0 = -\infty$.

\begin{proof}[Proof of Proposition \textup{\ref{prop:key}}]
We proceed by contradiction.
Suppose there exists a pair $(Q(t),m)$, where $Q(t) \in \mathbb{F}_p[t] \setminus \{0\}$ and $m \in \mathbb{Z}_{\geqslant 0}$, such that
\begin{equation}\label{eqn:impossible(Q,m)}
\left| \left\langle Q(t) \cdot t^m\Lambda(t)  \right\rangle \right| < \frac{1}{|Q(t)|} \quad\text{and}\quad t^{2^{r}}-1 \mid Q(t).
\end{equation}
We may then choose such a pair $(Q(t),m)$ with $\deg Q(t)$ minimal.
We shall construct another pair $(\widehat{Q}(t),\widehat{m})$ satisfying \eqref{eqn:impossible(Q,m)} with $\deg \widehat{Q}(t) < \deg Q(t)$. 

Since $t^{2^r}-1 \mid Q(t)$ and $Q(t) \neq 0$, we have
\begin{equation}\label{eqn:degQ>}
\deg Q(t) \geqslant 2^{r}.
\end{equation}
Write $Q(t)=\left(t^{2^r}-1\right)\widetilde{Q}(t)$ and $\widetilde{Q}(t)=\widetilde{Q}_0(t^2)+t\widetilde{Q}_1(t^2)$, where $\widetilde{Q}_0(t), \widetilde{Q}_1(t) \in \mathbb{F}_p[t]$.
Let
\begin{equation}\label{eqn:tildeQ0Q1}
Q_0(t) = \left( t^{2^{r-1}} - 1\right) \cdot \widetilde{Q}_0(t) \quad\text{and}\quad Q_1(t) = \left( t^{2^{r-1}} - 1\right) \cdot \widetilde{Q}_1(t).
\end{equation}
Then, $Q(t)=Q_0(t^2)+tQ_1(t^2)$ and $Q_0(t),Q_1(t) \in \mathbb{F}_p[t]$. Suppose that 
\[
\text{$\deg Q(t) = 2q+\epsilon$ and $m=2n+\delta$,\ where $q,n \in \mathbb{Z}_{\geqslant 0}$ and $\epsilon,\delta\in \{0,1\}$.}
\]
Then, 
\begin{center}
\begin{tabular}{ll}
$\deg Q_\epsilon(t) = q$,  & \quad  $\deg \widetilde{Q}_\epsilon(t) = q-2^{r-1}$, \\
$\deg Q_{1-\epsilon}(t) \leqslant q-1+\epsilon$,  & \quad $\deg \widetilde{Q}_{1-\epsilon}(t) \leqslant q-1+\epsilon-2^{r-1}$.
\end{tabular}
\end{center} 
By \eqref{eqn:degQ>}, we have $q \geqslant 2^{r-1}$. 
In particular, $Q_\epsilon(t) \in \mathbb{F}_p[t] \setminus \{0\}$ and $\deg Q_\epsilon(t) < \deg Q(t)$.

By \eqref{eqn:functional_equation}, we have
\[
\left\langle t^{2n+\delta} \Lambda(t) \right\rangle = \left\langle t^{2n+\delta}\Lambda(t^2) \right\rangle + \sum_{\zeta \in \mu^{*}_{2^r}} \frac{\zeta^{n+\delta} t^{1-\delta}}{t^{2}-\zeta}.
\]
Therefore,
\begin{align*}
&\ \left\langle Q(t) \cdot t^m\Lambda(t)  \right\rangle = \left\langle \left( Q_0(t^2)+tQ_1(t^2) \right) \cdot \left( t^{2n+\delta} \Lambda(t^2) + \sum_{\zeta \in \mu^{*}_{2^r}} \frac{\zeta^{n+\delta} t^{1-\delta}}{t^{2}-\zeta} \right)  \right\rangle \\
=&\ \underbrace{\left\langle Q_\delta(t^2) \cdot t^{2n+2\delta}\Lambda(t^2) +   \sum_{\zeta \in \mu^{*}_{2^r}} \frac{\zeta^{n+1}Q_{1-\delta}(\zeta)}{t^2-\zeta} \right\rangle}_{\text{even}} + \underbrace{ t \cdot \left\langle Q_{1-\delta}(t^2)\cdot t^{2n}\Lambda(t^2) + \sum_{\zeta \in \mu^{*}_{2^r}} \frac{\zeta^{n+\delta} Q_\delta(\zeta)}{t^2-\zeta} \right\rangle}_{\text{odd}}.
\end{align*}
By hypothesis, we have $\left\langle Q(t) \cdot t^m\Lambda(t)  \right\rangle = O(t^{-2q-1-\epsilon})$.
Thus,
\begin{align*}
\left\langle Q_\delta(t^2) \cdot t^{2n+2\delta}\Lambda(t^2) +   \sum_{\zeta \in \mu^{*}_{2^r}} \frac{\zeta^{n+1}Q_{1-\delta}(\zeta)}{t^2-\zeta} \right\rangle &= O(t^{-2q-2}), \\
\left\langle Q_{1-\delta}(t^2)\cdot t^{2n}\Lambda(t^2) + \sum_{\zeta \in \mu^{*}_{2^r}} \frac{\zeta^{n+\delta} Q_\delta(\zeta)}{t^2-\zeta} \right\rangle &= O(t^{-2q-2-2\epsilon}). 
\end{align*}
In other words,
\begin{align}
\left\langle Q_\delta(t) \cdot t^{n+\delta}\Lambda(t) + \sum_{\zeta \in \mu^{*}_{2^r}} \frac{\zeta^{n+1}Q_{1-\delta}(\zeta)}{t-\zeta} \right\rangle &= O(t^{-q-1}), \label{1.1}\\
\left\langle Q_{1-\delta}(t)\cdot t^{n}\Lambda(t) + \sum_{\zeta \in \mu^{*}_{2^r}} \frac{\zeta^{n+\delta} Q_\delta(\zeta)}{t-\zeta} \right\rangle &= O(t^{-q-1-\epsilon}). \label{1.2}
\end{align}
By \eqref{eqn:tildeQ0Q1}, we have $Q_0(\zeta)=-2\widetilde{Q}_0(\zeta)$ and $Q_1(\zeta)=-2\widetilde{Q}_1(\zeta)$ for any $\zeta \in \mu^{*}_{2^r}$. 
Multiplying \eqref{1.1} (respectively, \eqref{1.2}) by 
\[
\widetilde{Q}_{1-\delta}(t)=O\left(t^{q-2^{r-1}}\right)\qquad (\text{respectively, } t^\delta \widetilde{Q}_{\delta}(t)=
O\left(t^{q-2^{r-1}+\varepsilon\delta}\right) )
\]
we obtain
\begin{align}
\left\langle \left(t^{2^{r-1}}-1\right)\widetilde{Q}_0(t)\widetilde{Q}_1(t) \cdot t^{n+\delta}\Lambda(t) -2 \sum_{\zeta \in \mu^{*}_{2^r}} \frac{\zeta^{n+1}\widetilde{Q}_{1-\delta}(\zeta)^2}{t-\zeta} \right\rangle &= O(t^{-2^{r-1}-1}), \label{1.3}\\
\left\langle \left(t^{2^{r-1}}-1\right)\widetilde{Q}_0(t)\widetilde{Q}_1(t) \cdot t^{n+\delta}\Lambda(t) -2 \sum_{\zeta \in \mu^{*}_{2^r}} \frac{\zeta^{n+2\delta}\widetilde{Q}_{\delta}(\zeta)^2}{t-\zeta} \right\rangle &= O(t^{-2^{r-1}-1}). \label{1.4}
\end{align}
Subtracting \eqref{1.3} by \eqref{1.4}, we deduce that
\begin{equation}\label{1.5}
\sum_{\zeta \in \mu^{*}_{2^r}} \frac{\zeta^{n+\delta}\left( \widetilde{Q}_0(\zeta)^2 - \zeta \widetilde{Q}_1(\zeta)^2 \right)}{t-\zeta} = O(t^{-2^{r-1}-1}).
\end{equation}

Noting that
\begin{align*}
&\ \sum_{\zeta \in \mu^{*}_{2^r}} \frac{\zeta^{n+\delta}\left( \widetilde{Q}_0(\zeta)^2 - \zeta \widetilde{Q}_1(\zeta)^2 \right)}{t-\zeta} \\
=&\ \sum_{k=1}^{2^{r-1}} \left( \sum_{\zeta \in \mu^{*}_{2^r}} \zeta^{n+\delta+k-1} \left( \widetilde{Q}_0(\zeta)^2 - \zeta \widetilde{Q}_1(\zeta)^2 \right) \right) t^{-k} + O\left( t^{-2^{r-1}-1} \right),
\end{align*}
Equation \eqref{1.5} implies that
\begin{equation}\label{1.6}
\sum_{\zeta \in \mu^{*}_{2^r}} \zeta^{n+
\delta+k-1} \left( \widetilde{Q}_0(\zeta)^2 - \zeta \widetilde{Q}_1(\zeta)^2 \right) = 0 \quad\text{for any}\ k=1,2,\ldots,2^{r-1}.
\end{equation}
Since the Vandermonde matrix $\left(\zeta^{k-1} \right)_{\zeta \in \mu^{*}_{2^r}, 1 \leqslant k \leqslant 2^{r-1}}$ is invertible, Equation \eqref{1.6} implies that
\[
\widetilde{Q}_0(\zeta)^2 = \zeta \widetilde{Q}_1(\zeta)^2 \quad\text{for any}\ \zeta \in \mu^{*}_{2^r}.
\]
Since $\zeta$ is not a square in $\mathbb{F}_p$ for any $\zeta \in \mu^{*}_{2^r}$, we obtain
\begin{equation}\label{1.7}
\widetilde{Q}_0(\zeta) = \widetilde{Q}_1(\zeta) = 0 \quad\text{for any}\ \zeta \in \mu^{*}_{2^r}.
\end{equation}
Now, by \eqref{eqn:tildeQ0Q1}, \eqref{1.1}, \eqref{1.2} and \eqref{1.7}, we have 
\[
\left| \left\langle Q_\epsilon(t) \cdot t^{n+\epsilon\delta}\Lambda(t)  \right\rangle \right| < \frac{1}{|Q_\epsilon(t)|} \quad\text{and}\quad t^{2^{r}}-1 \mid Q_\epsilon(t).
\]
This is a contradiction since $Q_\epsilon(t) \in \mathbb{F}_p[t]\setminus\{0\}$ and $\deg Q_\epsilon(t) < \deg Q(t)$.
\end{proof}

\begin{acknowledgements}
L.L. is supported by Research Foundation for Scholars of Xiamen University X2450218.
\end{acknowledgements}

\vspace*{3mm}
\begin{flushright}
\begin{minipage}{148mm}\sc\footnotesize
L.\,L.: School of Mathematical Sciences, Xiamen University, Fujian, China \\
{\it E-mail addresses}: \href{mailto:lilaimath@gmail.com}{{\tt lilaimath@gmail.com}}, \href{mailto:lilai@xmu.edu.cn}{{\tt lilai@xmu.edu.cn}} \vspace*{3mm}
\end{minipage}
\end{flushright}

\begin{flushright}
\begin{minipage}{148mm}\sc\footnotesize
J.\,S.: Department of Mathematics, University of Duisburg-Essen, Essen, Germany \\
{\it E-mail address}: \href{mailto:johannes.sprang@uni-due.de}{{\tt johannes.sprang@uni-due.de}} \vspace*{3mm}
\end{minipage}
\end{flushright}

\end{document}